\documentclass[reqno,12pt]{amsart}
\usepackage{amsfonts}
\usepackage{bbm}
\usepackage{}
\setlength{\textheight}{23cm}
\setlength{\textwidth}{16cm}
\setlength{\oddsidemargin}{0cm}
\setlength{\evensidemargin}{0cm}
\setlength{\topmargin}{0cm}
\numberwithin{equation}{section}
\usepackage{color}

\usepackage{indentfirst}
\usepackage{color}
\usepackage{amssymb}
\usepackage{mathrsfs}
\usepackage{xy}
\xyoption{all}
\def\Ext{\mbox{\rm Ext}\,} \def\Hom{\mbox{\rm Hom}} \def\dim{\mbox{\rm dim}\,} 
\def\lr#1{\langle #1\rangle} \def\fin{\hfill$\square$}   
\def\Ker{\mbox{\rm Ker}\,}    \def\Coker{\mbox{\rm Coker}\,}

\def\Aut{\mbox{\rm Aut}\,}\def\A{\mathcal{A}\,} \def\H{\mathcal{H}\,}

\theoremstyle{plain}
\newtheorem{theorem}{\bf Theorem}[section]
\newtheorem{lemma}[theorem]{\bf Lemma}
\newtheorem{corollary}[theorem]{\bf Corollary}

\theoremstyle{definition}
\newtheorem{definition}[theorem]{\bf Definition}
\newtheorem{remark}[theorem]{\bf Remark}
\newtheorem{example}[theorem]{\bf Example}

\newcommand{\bt}{\begin{theorem}}
\newcommand{\et}{\end{theorem}}
\newcommand{\bl}{\begin{lemma}}
\newcommand{\el}{\end{lemma}}
\newcommand{\bd}{\begin{definition}}
\newcommand{\ed}{\end{definition}}
\newcommand{\bc}{\begin{corollary}}
\newcommand{\ec}{\end{corollary}}
\newcommand{\bp}{\begin{proof}}
\newcommand{\ep}{\end{proof}}
\newcommand{\bx}{\begin{example}}
\newcommand{\ex}{\end{example}}
\newcommand{\br}{\begin{remark}}
\newcommand{\er}{\end{remark}}
\newcommand{\be}{\begin{equation}}
\newcommand{\ee}{\end{equation}}
\newcommand{\ba}{\begin{align}}
\newcommand{\ea}{\end{align}}
\newcommand{\bn}{\begin{enumerate}}
\newcommand{\en}{\end{enumerate}}
\newcommand{\bcs}{\begin{cases}}
\newcommand{\ecs}{\end{cases}}

\makeatletter
\renewcommand{\section}{\@startsection{section}{1}{0mm}
  {-\baselineskip}{0.5\baselineskip}{\bf\leftline}}
\makeatother

\begin{document}

\title[To\"en's formula and Green's formula]{To\"en's formula and Green's formula}
\author{Haicheng Zhang}
\address{Institute of Mathematics, School of Mathematical Sciences, Nanjing Normal University,
 Nanjing 210023, P. R. China.\endgraf}
\email{zhanghc@njnu.edu.cn}

\subjclass[2010]{16G20, 17B20, 17B37.}
\keywords{Derived Hall algebra; To\"en's formula; Green's formula.}

\begin{abstract}
Let $\mathcal {A}$ be a finitary hereditary abelian category. In this note, we use the associativity of the derived Hall algebra associated to the bounded derived category of $\mathcal {A}$, whose multiplication structure constants are given by the so-called To\"en's formula, to give a simple proof of Green's formula associated to Hall algebras.
\end{abstract}

\maketitle

\section{Introduction}

The Hall algebra of a finite dimensional algebra over a finite field was introduced by Ringel \cite{R90} in 1990. Ringel \cite{R90a} proved that if $A$ is a representation finite hereditary algebra, then the twisted Hall algebra $\mathcal {H}_{v}(A)$, called the Ringel--Hall algebra, is isomorphic to the positive part of the corresponding quantized enveloping algebra. After that, Green \cite{Gr95} generalised Ringel's result to any hereditary algebra $A$ by introducing a bialgebra structure on $\mathcal {H}_{v}(A)$. It is worth mentioning that the key ingredient of the bialgebra structure of $\mathcal {H}_{v}(A)$ is the so-called Green's formula, whose original proof is given in some complicated homological methods. Then Xiao \cite{Xiao} provided the antipode for $\mathcal {H}_{v}(A)$, and obtained a realization of the whole quantized enveloping algebra by constructing the Drinfeld double of the extended Ringel-Hall algebra of $A$.

In order to give an intrinsic realization of the entire quantized enveloping algebra via Hall algebra approach,  To\"en \cite{Toen} defined a derived Hall algebra for a differential graded category satisfying some finiteness conditions, whose multiplication structure constants are given by the so-called To\"en's formula.
Later on, Xiao and Xu \cite{XiaoXu} generalised To\"en's result to any triangulated category satisfying certain finiteness conditions, which includes the bounded derived category of a finitary hereditary abelian category.

As a matter of fact, Bridgeland \cite{Br} has recently given a beautiful realization of the entire quantized enveloping algebra via the Hall algebra of $2$-cyclic complexes of projective modules over a hereditary algebra $A$, which is called Bridgeland's Hall algebra of $A$. Yanagida \cite{Yan} proved that Bridgeland's Hall algebra of a hereditary algebra is isomorphic to its Drinfeld double Hall algebra. Afterwards, Zhang \cite{ZHC} used the associativity of the Hall algebra of $A$ to give a simple proof of this result.

In order to generalise Bridgeland's construction to any hereditary abelian category which may not have enough projectives, Lu and Peng \cite{LP} introduced the modified Ringel--Hall algebra. Recently, Lin and Peng \cite{LinP} used the associativity of the modified Ringel--Hall algebra of bounded complexes of a hereditary abelian category to give a simple proof of Green's formula.

Actually, if $\A$ is a hereditary abelian category with enough projectives, it is similar to \cite{LinP} that we can also give a simple proof of Green's formula via the associativity of Bridgeland's Hall algebra of bounded complexes or $m$-cyclic $(m>2)$ complexes over projective objects, which is defined in \cite{ChenD} and further studied in \cite{ZHC2}.

In this note, inspired by the work in \cite{LinP}, we want to disclose the relation between Green's formula and To\"en's formula, which are two significant formulas in Hall algebra and derived Hall algebra, respectively. Explicitly, we will prove that To\"en's formula ``implies" Green's formula. That is, we will use the associativity of the derived Hall algebra to give a simple proof of Green's formula, without involving in modified Ringel--Hall algebras or Bridgeland's Hall algebras.

Throughout the paper, $k$ is a finite field with $q$ elements, $\A$ is a finitary (skeletally small) hereditary abelian $k$-category, and $D^b(\mathcal {A})$ is the bounded derived category of $\A$. Let $K(\mathcal{A})$ be the Grothendieck group of $\A$, and for any $M\in\A$ we denote by $\hat{M}$ the image of $M$ in $K(\mathcal{A})$. For a finite set $S$, we denote by $|S|$ its cardinality. For any object $X$ in $\A$ or $D^b(\A)$, we denote by $\Aut X$ the automorphism group of $X$, and write $a_X$ for $|\Aut X|$.

\section{Preliminaries}

In this section, we recall the definitions of Hall algebras and derived Hall algebras.
\subsection{Hall algebras}
Given $X,Y,L\in\A$,
let $$W_{XY}^L:=\{(\varphi,\psi)~|~0\rightarrow Y\xrightarrow{\varphi} L\xrightarrow{\psi} X\rightarrow 0~\text{is~exact~in}~\A\}.$$ The group $G:=\Aut{X}\times\Aut{Y}$ acts on $W_{XY}^L$ via
$$\xymatrix{&0\ar[r]&Y\ar[r]^{\varphi}\ar[d]_{g}&L\ar[r]^{\psi}\ar@{=}[d]&X\ar[r]\ar[d]^{f}&0\\
&0\ar[r]&Y\ar[r]^{\overline{\varphi}}&L\ar[r]^{\overline{\psi}}&X\ar[r]&0.}$$
That is, for any $(\varphi,\psi)\in W_{XY}^L$ and $(f,g)\in G$, $(f,g)\cdot (\varphi,\psi)=(\varphi g^{-1},f\psi)$.
We denote the set of $G$-orbits by $V_{XY}^L$. Since $\varphi$ is monic and $\psi$ epic this action is free, and we define $$g_{X Y}^{L}:=|V_{X Y}^{L}|=\frac{|W_{X Y}^{L}|}{a_Xa_Y}.$$

\begin{definition}\label{def of Hall}
The \emph{Hall algebra} $\H(\A)$ of $\A$ is the vector space over $\mathbb{C}$ with basis $\{u_{[X]}~|~X\in\A\}$ indexed by the set of isoclasses of objects in $\A$, and with multiplication defined by
\[u_{[X]} u_{[Y]} = \sum\limits_{[L]} g_{X,Y}^{L} u_{[L]}.\]
\end{definition}

It is well-known that the Hall algebra $\H(\A)$ is an associative and unital algebra.

\subsection{Derived Hall algebras}
Given $X,Y,L\in D^b(\A)$, let $$W_{XY}^L:=\{(f,g,h)~|~Y\xrightarrow{f}L\xrightarrow{g}X\xrightarrow{h}Y[1]~~\mbox{is~a~triangle~in}~~D^b(\A)\}.$$
Consider the action of $\Aut Y$ on $W_{XY}^L$ defined by
$$\xymatrix{Y\ar[r]^-f\ar[d]^-\eta&L\ar[r]^-g\ar@{=}[d]&X\ar[r]^-h\ar@{=}[d]&Y[1]\ar[d]^-{\eta[1]}\\
Y\ar[r]^-{f'}&L\ar[r]^-{g}&X\ar[r]^-{h'}&Y[1]}$$ with $\eta\in\Aut Y$.
Denote the set of orbits by $(W_{XY}^L)_Y^*$. Dually, we consider the action of $\Aut X$ on $W_{XY}^L$ and obtain the orbit set $(W_{XY}^L)_X^*$.

Since the actions above are not free, in general, $$\frac{|(W_{XY}^L)_Y^*|}{a_X}\neq\frac{|(W_{XY}^L)_X^*|}{a_Y}.$$
However, by \cite{Toen,XiaoXu} we know that
$$\frac{|(W_{XY}^L)_Y^*|}{a_X}\cdot\frac{\{L,X\}}{\{X,X\}}=\frac{|(W_{XY}^L)_X^*|}{a_Y}\cdot\frac{\{Y,L\}}{\{Y,Y\}}=:F_{X,Y}^L,$$
where $\{M,N\}:=\prod\limits_{i>0}|\Hom_{D^b(\A)}(M[i],N)|^{(-1)^i}$, for any $M,N\in D^b(\A)$. In fact, it is easy to see that
\begin{equation*}|(W_{XY}^L)_Y^*|=|\Hom_{D^b(\A)}(L,X)_{Y[1]}|~~~~\mbox{and}~~~~ |(W_{XY}^L)_X^*|=|\Hom_{D^b(\A)}(Y,L)_{X}|,\end{equation*} where $\Hom_{D^b(\A)}(M,N)_Z$ denotes the subset of $\Hom_{D^b(\A)}(M,N)$ consisting of morphisms $M\to N$ whose cone is isomorphic to $Z$. Namely, we have the following formula
$$F_{X,Y}^L=\frac{|\Hom_{D^b(\A)}(L,X)_{Y[1]}|}{a_X}\cdot\frac{\{L,X\}}{\{X,X\}}=
\frac{|\Hom_{D^b(\A)}(Y,L)_{X}|}{a_Y}\cdot\frac{\{Y,L\}}{\{Y,Y\}},$$ which is called \emph{To\"en's formula}. It is easy to see that for any $X,Y,L\in\A$ we have that $F_{X,Y}^L=g_{X,Y}^L$.

\begin{definition}
The \emph{derived Hall algebra} $\mathcal {D}\mathcal {H}(\A)$ of $\A$ is the vector space over $\mathbb{C}$ with basis $\{u_{[X]}~|~X\in D^b(\A)\}$ indexed by the set of isoclasses of objects in $D^b(\A)$, and with multiplication defined by
$$u_{[X]} u_{[Y]}=\sum\limits_{[L]}F_{X,Y}^Lu_{[L]}.$$
\end{definition}

By \cite{Toen,XiaoXu}, we know that $\mathcal {D}\mathcal {H}(\A)$ is an associative and unital algebra.

For the convenience of the calculation in the sequel, let us introduce the Euler form. Given objects $M,N \in \mathcal{A}$, define $$\lr{M,N}:=\dim_k{\Hom_{\mathcal{A}}(M,N)}-\dim_k{\Ext_{\mathcal{A}}^{1}(M,N)}.$$
It descends to give a bilinear form
$$\lr{\cdot ,\cdot }: K(\mathcal{A})\times K(\mathcal{A})\longrightarrow \mathbb{Z},$$ known as the \emph{Euler form}.

\section{Main results}
The following formulas can be viewed as generalised versions of Green's formula.
\begin{theorem}\label{main}
Given $M,N,M',N',Y\in\A$, we have the following two formulas

$(1)$~\begin{equation}
\begin{split}
&a_Ma_Na_{M'}\sum\limits_{[L],[P]}g_{MN}^Lg_{PY}^{N'}g_{M'P}^L\frac{a_P}{a_L}\\&=
\sum\limits_{[A],[A'],[B],[B'],[P']}\frac{|\Hom_{\A}(A,Y)|}{|\Hom_{\A}(A,B')|}\frac{|\Ext_{\A}^1(A,B')|}{|\Ext_{\A}^1(A,Y)|}
g_{AA'}^{M}g_{P'Y}^{B'}g_{BP'}^{N}g_{AB}^{M'}g_{A'B'}^{N'}a_{A}a_{A'}a_{B}a_{P'};
\end{split}
\end{equation}
$(2)$~\begin{equation}
\begin{split}
&a_Ma_Na_{N'}\sum\limits_{[L],[P]}g_{MN}^Lg_{PN'}^{L}g_{YP}^{M'}\frac{a_P}{a_L}\\&=
\sum\limits_{[A],[A'],[B],[B'],[P']}\frac{|\Hom_{\A}(Y,B')|}{|\Hom_{\A}(A,B')|}\frac{|\Ext_{\A}^1(A,B')|}{|\Ext_{\A}^1(Y,B')|}
g_{P'A'}^{M}g_{YP'}^{A}g_{BB'}^{N}g_{AB}^{M'}g_{A'B'}^{N'}a_{P'}a_{A'}a_{B}a_{B'}.
\end{split}
\end{equation}
\end{theorem}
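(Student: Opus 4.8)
The plan is to realise both sides of each identity as the structure constants of a triple product in the derived Hall algebra $\mathcal{DH}(\mathcal{A})$, computed in two different ways. The key observation is that for objects of $\mathcal{A}$ sitting in $D^b(\mathcal{A})$, the coefficient $F_{X,Y}^L$ coincides with $g_{X,Y}^L$, but the shifted objects $X[1]$, $Y[-1]$, etc. produce genuinely derived structure constants whose To\"en's formula unfolds, via the explicit $\{-,-\}$ factors and the identities $|\Hom_{D^b(\A)}(M,N[1])| = |\Ext^1_\A(M,N)|$, $|\Hom_{D^b(\A)}(M,N)| = |\Hom_\A(M,N)|$ (for $M,N\in\A$), into exactly the Hom/Ext ratios appearing on the right-hand sides. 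Concretely, for part $(1)$ I would consider the product $u_{[M']}\,u_{[N[-1]]}\,u_{[\,?\,]}$ or, more symmetrically, a product of the form $u_{[M]}(u_{[Y[-1]]}u_{[N']})$ versus $(u_{[M]}u_{[Y[-1]]})u_{[N']}$ inside $\mathcal{DH}(\mathcal{A})$, choosing the three arguments among $M, N, M', N', Y$ and their shifts so that the associativity relation, after extracting the coefficient of a fixed basis element $u_{[Z]}$, yields the displayed equation. The bookkeeping of which shifts to use is dictated by matching the subscripts of the $g$'s: the left side has chains $M*N = L$, $P*Y = N'$, $M'*P = L$, suggesting a ``Y'' placed in degree one lower, while the right side's chains $A*A' = M$, $A*B = M'$, $A'*B' = N'$, $P'*Y = B'$, $B*P' = N$ encode a second associativity pattern on the same triple product.

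The main steps, in order, are: (i) choose the three elements $X_1,X_2,X_3\in D^b(\A)$ (objects of $\A$, possibly shifted by $\pm1$) and the target $[Z]$ so that the two bracketings $(X_1X_2)X_3$ and $X_1(X_2X_3)$ reproduce the two sides; (ii) expand each $F$-coefficient occurring via To\"en's formula, using that any triangle in $D^b(\A)$ among objects concentrated in two adjacent degrees comes from a long exact sequence in $\A$, so that each $F_{X_i[*],X_j[*]}^{W}$ either equals some $g$ or equals a product of a $g$ with a ratio of $\{-,-\}$-terms; (iii) simplify all the $\{-,-\}$ factors: since $\A$ is hereditary, for $M,N\in\A$ one has $\{M,N\} = |\Hom_\A(M,N)|/|\Ext^1_\A(M,N)|$ (only $i=1$ survives), and $\{M,N[\pm1]\}$ telescopes similarly, so the accumulated $\{-,-\}$ quotients collapse to the single Hom/Ext ratio $\frac{|\Hom_\A(A,Y)|}{|\Hom_\A(A,B')|}\cdot\frac{|\Ext^1_\A(A,B')|}{|\Ext^1_\A(A,Y)|}$ in case $(1)$ and its analogue in case $(2)$; (iv) match the automorphism factors $a_{(-)}$ and the $\frac{a_P}{a_L}$ on the left, which appear naturally because $F_{X,Y}^L$ carries a $1/a_X$ or $1/a_Y$ normalisation, whereas the classical $g_{X,Y}^L = |W_{X,Y}^L|/(a_Xa_Y)$; clearing denominators across the associativity identity produces precisely the displayed powers of $a$'s.

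For part $(2)$ the argument is parallel, with a different choice of bracketed triple reflecting that now the ``Y'' threads through the first factor ($Y*P' = A$) rather than the last; everything else — unfolding To\"en's formula, collapsing the $\{-,-\}$ telescopes using heredity, reconciling the $a_{(-)}$ normalisations — goes through verbatim. I expect the main obstacle to be step (i)–(ii): pinning down the \emph{exact} triple and target so that \emph{every} intermediate summation index ($L,P$ on the left; $A,A',B,B',P'$ on the right) arises as the isoclass of a genuine mapping cone, and verifying that the derived structure constants never involve objects spread over three or more degrees (which would introduce higher $\Hom_{D^b(\A)}(-,-[i])$, $i\ge 2$, that do not reduce to Hom/Ext in $\A$). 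Once the combinatorial dictionary between the five Hall numbers on each side and the mapping-cone triangles is fixed, the remaining computation is the routine simplification sketched in (iii)–(iv), entirely analogous to the treatment of Green's formula via modified Ringel--Hall algebras in \cite{LinP}.
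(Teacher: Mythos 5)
Your overall strategy --- exploit the associativity of $\mathcal{DH}(\mathcal{A})$ for a triple product in which some factors are shifted objects of $\mathcal{A}$, then unfold the derived Hall numbers back into classical ones --- is exactly the paper's. But there is a concrete gap in step~(ii) that would derail the bookkeeping. You assert that each $F_{X_i[*],X_j[*]}^W$ "either equals some $g$ or equals a product of a $g$ with a ratio of $\{-,-\}$-terms." This is false for the crucial cases. The paper's key Lemma~\ref{second} shows that for $M,N,X,Y\in\mathcal{A}$,
\[
F_{M[1]\,N}^{X[1]\oplus Y}\;=\;q^{-\lr{Y,X}}\,\frac{a_Xa_Y}{a_Ma_N}\sum_{[L]}g_{LX}^{M}\,g_{YL}^{N}\,a_L,
\]
i.e.\ a derived Hall number in adjacent degrees unfolds into a \emph{sum} over an auxiliary index $L$ (the image of a morphism in the underlying four-term exact sequence $0\to X\to N\to M\to Y\to 0$), not into a single $g$ up to a scalar. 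This extra summation is precisely where the indices $P$ and $P'$ on both sides of Theorem~\ref{main} originate. Without this reduction lemma your plan cannot produce the right-hand side's five-fold sum or the left-hand side's sum over $P$; the $\{-,-\}$ ratios alone account only for the single Hom/Ext prefactor, via $q^{\lr{\hat A,\hat Y-\hat B'}}$, not for the Hall-number combinatorics.

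A second, smaller issue: your tentative choice of triple, $u_{[M]}(u_{[Y[-1]]}u_{[N']})$ versus $(u_{[M]}u_{[Y[-1]]})u_{[N']}$, does not match the structure of formula (1). The paper instead takes $u_{N'[1]}(u_Mu_N)=(u_{N'[1]}u_M)u_N$, then identifies the coefficient of $u_{Y[1]\oplus M'}$ (after splitting $u_{Y[1]\oplus M'}=u_{M'}u_{Y[1]}$ via $F_{M'\,Y[1]}^{Y[1]\oplus M'}=1$ and using Lemma~\ref{first} to see that every cone appearing is of the form $(\text{object of }\mathcal{A})[1]\oplus(\text{object of }\mathcal{A})$). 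With that triple, the first bracketing produces $\sum_{[L]}g_{MN}^L\,F_{N'[1]\,L}^{Y[1]\oplus M'}$, whose $F$ unfolds by Lemma~\ref{second} into the sum over $[P]$ on the left of (1); the second bracketing produces $\sum F_{N'[1]\,M}^{B'[1]\oplus A}\,F_{B'[1]\,N}^{Y[1]\oplus B}\,g_{AB}^{M'}$, which unfolds into the sums over $[A'],[P']$ on the right. Part~(2) uses $(u_Mu_N)u_{M'[-1]}=u_M(u_Nu_{M'[-1]})$. You flagged step~(i)--(ii) as "the main obstacle," and indeed that is where the proof lives; as written, the proposal does not get past it.
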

\begin{corollary}{\rm\textbf{(Green's formula)}}
Given $M,N,M',N'\in\A$, we have the following formula
\begin{equation}
\begin{split}
&a_Ma_Na_{M'}a_{N'}\sum\limits_{[L]}g_{MN}^Lg_{M'N'}^L\frac{1}{a_L}\\&=
\sum\limits_{[A],[A'],[B],[B']}\frac{|\Ext_{\A}^1(A,B')|}{|\Hom_{\A}(A,B')|}
g_{AA'}^{M}g_{BB'}^{N}g_{AB}^{M'}g_{A'B'}^{N'}a_{A}a_{A'}a_{B}a_{B'}.
\end{split}
\end{equation}
\end{corollary}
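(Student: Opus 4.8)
The plan is to deduce Green's formula as the special case of Theorem~\ref{main} obtained by taking $Y=0$. Setting $Y=0$ in formula~$(1)$, the constraint $g_{PY}^{N'}$ forces $P\cong N'$ on the left-hand side, and on the right-hand side $g_{P'Y}^{B'}$ forces $P'\cong B'$, while $g_{BP'}^{N}$ becomes $g_{BB'}^{N}$ and the Hom/Ext prefactor collapses to $\frac{|\Ext_{\A}^1(A,B')|}{|\Hom_{\A}(A,B')|}$ since $\Hom_{\A}(A,0)=\Ext^1_{\A}(A,0)=0$ contribute nothing (their cardinalities are $1$). Thus the right-hand side becomes exactly
\[
\sum\limits_{[A],[A'],[B],[B']}\frac{|\Ext_{\A}^1(A,B')|}{|\Hom_{\A}(A,B')|}
g_{AA'}^{M}g_{BB'}^{N}g_{AB}^{M'}g_{A'B'}^{N'}a_{A}a_{A'}a_{B}a_{B'},
\]
after absorbing $a_{P'}=a_{B'}$. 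This matches the right-hand side of Green's formula up to the factor $a_{N'}$, which is supplied by comparing with the left-hand side.

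**Next I would** simplify the left-hand side of $(1)$ with $Y=0$. The sum over $[P]$ collapses to $P\cong N'$, so the left-hand side becomes $a_Ma_Na_{M'}\sum_{[L]} g_{MN}^L g_{M'N'}^L \frac{a_{N'}}{a_L}$. Pulling $a_{N'}$ out of the sum gives $a_Ma_Na_{M'}a_{N'}\sum_{[L]} g_{MN}^L g_{M'N'}^L \frac{1}{a_L}$, which is precisely the left-hand side of Green's formula. Equating the two simplified sides yields the corollary. I would present this as a short computation, being careful that $g_{PY}^{N'}$ with $Y=0$ equals $\delta_{[P],[N']}$ (every short exact sequence $0\to 0\to N'\to P\to 0$ forces $P\cong N'$, and there is exactly one orbit), and similarly $g_{P'Y}^{B'}=\delta_{[P'],[B']}$.

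**The only subtlety**—and it is minor—is bookkeeping the cardinality factors: one must check that when $Y=0$ the ratios $\frac{|\Hom_{\A}(A,Y)|}{|\Hom_{\A}(A,B')|}\cdot\frac{|\Ext_{\A}^1(A,B')|}{|\Ext_{\A}^1(A,Y)|}$ reduce correctly, using $|\Hom_{\A}(A,0)|=|\Ext^1_{\A}(A,0)|=1$, to $\frac{|\Ext_{\A}^1(A,B')|}{|\Hom_{\A}(A,B')|}$, and that the automorphism factors $a_{A}a_{A'}a_{B}a_{P'}$ with $a_{P'}=a_{B'}$ reproduce $a_{A}a_{A'}a_{B}a_{B'}$ exactly. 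Alternatively, formula~$(2)$ with $Y=0$ gives the same conclusion by a symmetric argument (there $g_{YP}^{M'}$ forces $P\cong M'$ and $g_{YP'}^{A}$ forces $P'\cong A$), which could be mentioned as a consistency check. Since Theorem~\ref{main} is assumed, there is no real obstacle here; the work is entirely in carefully performing the specialization $Y=0$ and matching terms.
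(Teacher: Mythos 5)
Your proposal is correct and is exactly the approach the paper takes: the paper's entire proof of the corollary is "Taking $Y$ in Theorem~\ref{main} to be zero, we complete the proof," and your write-up simply fills in the bookkeeping (the collapse of the sums over $[P]$ and $[P']$ via $g_{P,0}^{N'}=\delta_{[P],[N']}$ and $g_{P',0}^{B'}=\delta_{[P'],[B']}$, and the trivialization of the Hom/Ext factors at $Y=0$) that the paper leaves implicit.
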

\begin{proof}
Taking $Y$ in Theorem \ref{main} to be zero, we complete the proof.
\end{proof}

Before proving Theorem \ref{main}, we give two necessary lemmas.

The first lemma is well-known (for example, see \cite[Lemma 3.15]{RW} for a proof).
\begin{lemma}\label{first}
Let $f:X\to Y$ be a morphism in $\A$. Then $f$ fits into the following triangle in $D^b(\A)$
$$\xymatrix{X\ar[r]^{f}&Y\ar[r]&\Ker (f)[1]\oplus\Coker (f)\ar[r]&X[1].}$$
\end{lemma}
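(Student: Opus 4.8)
\textbf{Proof strategy for Lemma~\ref{first}.}

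The plan is to build the desired triangle directly from the canonical factorization of $f$ through its image, using the fact that $\A$ is abelian (so every morphism has a well-behaved kernel, cokernel, image) together with the basic machinery of triangles in $D^b(\A)$: short exact sequences in $\A$ induce triangles, and triangles can be composed via the octahedral axiom. Write $K=\Ker(f)$, $C=\Coker(f)$, and $I=\im(f)$, and let $f=\iota\circ\pi$ be the canonical factorization with $\pi:X\twoheadrightarrow I$ and $\iota:I\hookrightarrow Y$.

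First I would record the two short exact sequences $0\to K\to X\xrightarrow{\pi} I\to 0$ and $0\to I\xrightarrow{\iota} Y\to C\to 0$, which in $D^b(\A)$ give triangles
$$\xymatrix{K\ar[r]&X\ar[r]^-{\pi}&I\ar[r]&K[1]}\qquad\text{and}\qquad\xymatrix{I\ar[r]^-{\iota}&Y\ar[r]&C\ar[r]&I[1].}$$
Next I would apply the octahedral axiom to the composite $f=\iota\pi:X\to Y$, using these two triangles as two of its faces, to obtain a triangle
$$\xymatrix{I\ar[r]&Y\ar[r]&\Cone(f)\ar[r]&I[1]}$$ together with a triangle relating $\Cone(f)$, $K[1]$, and $C$; concretely the octahedron yields a triangle $C\to\Cone(f)\to K[1]\to C[1]$. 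Since $\Ext^1_\A(K,C)=0$ would be too much to hope for in general, the point is instead that this last triangle splits: the connecting morphism $K[1]\to C[1]$ lives in $\Hom_{D^b(\A)}(K[1],C[1])=\Hom_{D^b(\A)}(K,C)$, but one checks it is zero because it factors through the relevant composite of maps in the octahedron which is already null (the map $X\to Y$ kills $K$ and the map $Y\to C$ kills the image of $I$, so the induced map on the "defect" terms vanishes). Hence $\Cone(f)\cong K[1]\oplus C$, and substituting into the triangle $X\xrightarrow{f}Y\to\Cone(f)\to X[1]$ — which exists by the very definition of the cone — gives exactly the claimed triangle $X\xrightarrow{f}Y\to \Ker(f)[1]\oplus\Coker(f)\to X[1]$.

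The main obstacle is making the splitting argument for $\Cone(f)\cong\Ker(f)[1]\oplus\Coker(f)$ genuinely rigorous rather than hand-wavy: one must pin down the connecting morphism produced by the octahedral axiom and show it vanishes, which requires tracking the maps in the octahedron diagram carefully. An alternative, perhaps cleaner, route that avoids the octahedral bookkeeping is to verify the defining property of a triangle directly: exhibit explicit morphisms $Y\to K[1]\oplus C$ and $K[1]\oplus C\to X[1]$ and check rotational exactness by applying $\Hom_{D^b(\A)}(T,-)$ for an arbitrary test object $T$, reducing everything to the long exact sequences in $\A$ coming from the two short exact sequences above and the hereditary hypothesis $\Ext^{\ge 2}_\A=0$ (which is what forces the cohomology of $\Cone(f)$ to be concentrated in the two degrees occupied by $C$ and $K[1]$, and hence — since $D^b(\A)$ of a hereditary category has every object isomorphic to the direct sum of shifts of its cohomologies — forces the splitting). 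Since the statement is classical and a reference is already cited, I would present whichever of these two arguments is shorter, likely the second, and keep the verification to a few lines.
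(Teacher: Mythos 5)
The paper does not actually prove this lemma; it simply records it as well known and points to \cite[Lemma 3.15]{RW}, so there is no ``paper proof'' to match against. Judged on its own merits, your second route is correct and is the standard argument: since $\A$ is hereditary, every object of $D^b(\A)$ is isomorphic to the direct sum of the shifts of its cohomology objects, and $\Cone(f)$ has cohomology $\Ker(f)$ in degree $-1$ and $\Coker(f)$ in degree $0$, whence $\Cone(f)\cong\Ker(f)[1]\oplus\Coker(f)$ and the claimed triangle is just $X\xrightarrow{f}Y\to\Cone(f)\to X[1]$. Presenting that version, as you say you would, gives a complete proof.

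Your first (octahedral) route, however, contains a bookkeeping error worth flagging. With $f=\iota\pi$, $\Cone(\pi)\cong\Ker(f)[1]$ and $\Cone(\iota)\cong\Coker(f)$, the octahedral axiom produces the triangle
$$\Ker(f)[1]\longrightarrow\Cone(f)\longrightarrow\Coker(f)\longrightarrow\Ker(f)[2],$$
not the triangle $\Coker(f)\to\Cone(f)\to\Ker(f)[1]\to\Coker(f)[1]$ that you wrote down: the base of the octahedron always runs $\Cone(u)\to\Cone(vu)\to\Cone(v)$. This matters, because in the correct orientation the connecting morphism lies in $\Hom_{D^b(\A)}(\Coker(f),\Ker(f)[2])=\Ext^2_{\A}(\Coker(f),\Ker(f))=0$, so the splitting is automatic and this is precisely where the hereditary hypothesis enters. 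In your orientation the connecting morphism lies in $\Hom_{\A}(\Ker(f),\Coker(f))$, which has no reason to vanish, and the factoring argument you sketch for its vanishing is not a proof. So if you keep the octahedral version, fix the orientation and replace the hand-waving by the one-line observation that $\Ext^2_{\A}=0$; otherwise your second argument is the one to write out.
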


The following lemma is crucial to the proof of Main theorem.
\begin{lemma}\label{second}
Given $M,N,X,Y\in\A$, we have that
\begin{equation}
F_{M[1]N}^{X[1]\oplus Y}=F_{MN[-1]}^{X\oplus Y[-1]}=q^{-\lr{Y,X}}\frac{a_Xa_Y}{a_Ma_N}\sum\limits_{[L]}g_{LX}^Mg_{YL}^Na_L.
\end{equation}
\end{lemma}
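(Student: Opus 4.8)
\textbf{Proof strategy for Lemma \ref{second}.}

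The plan is to compute the derived Hall numbers $F_{M[1]N}^{X[1]\oplus Y}$ and $F_{MN[-1]}^{X\oplus Y[-1]}$ directly from To\"en's formula, reducing everything to morphisms and cones in $D^b(\A)$ that can be read off from the abelian category $\A$. The symmetry between the two expressions is just a shift, so it suffices to treat, say, $F_{MN[-1]}^{X\oplus Y[-1]}$ in detail and note that applying $[1]$ to all objects gives the other one (since $[1]$ is an autoequivalence, it preserves all the relevant Hom-spaces, automorphism groups, and the multiplicity functions $\{\cdot,\cdot\}$). So the real content is one computation.

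First I would use the version of To\"en's formula that is cheapest here. Since $Y[-1]$ is the shift of an object of $\A$ and $\A$ is hereditary, $\Hom_{D^b(\A)}(Y[-1], ?)$ lives in two adjacent degrees only, which keeps the $\{\cdot,\cdot\}$-factors tractable. Concretely I would write
$$F_{MN[-1]}^{X\oplus Y[-1]}=\frac{|\Hom_{D^b(\A)}(N[-1], X\oplus Y[-1])_{M}|}{a_M}\cdot\frac{\{N[-1], X\oplus Y[-1]\}}{\{N[-1],N[-1]\}},$$
and then identify the numerator set. A morphism $N[-1]\to X\oplus Y[-1]$ is a pair consisting of a morphism $N[-1]\to X$, i.e. an element of $\Ext^1_\A(N,X)$, together with a morphism $N[-1]\to Y[-1]$, i.e. an element of $\Hom_\A(N,Y)$. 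The cone of such a pair must be isomorphic to $M\in\A$. Here I would invoke Lemma \ref{first}: given $g\in\Hom_\A(N,Y)$, its cone (inside the $Y[-1]$-summand, suitably shifted) is $\Ker(g)[1]\oplus\Coker(g)$ — wait, more carefully, one analyzes the cone of the full map $(e,g)\colon N[-1]\to X\oplus Y[-1]$ via the octahedral axiom, peeling off the two components. The upshot I expect is that the cone being an honest object $M$ of $\A$ forces $g$ to be injective (so $\Coker(g)$ accounts for part of $M$) and pins down the extension class $e$; counting the pairs $(e,g)$ that produce a fixed $M$ is then exactly a sum over $[L]$ of Hall numbers $g_{LX}^M g_{YL}^N$ weighted by automorphisms, where $L$ plays the role of the "intermediate" subobject $N/\Ker$-type term. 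I would organize this count by stratifying $\Hom_\A(N,Y)$ according to the isoclass $[L]$ of the image of $g$ (equivalently $L=\Coker$ of $\ker g \hookrightarrow N$), giving $|\{g\colon N\twoheadrightarrow L\hookrightarrow Y\}| = g_{?,?}^{?}\cdot(\text{automorphism factors})$, and then for each such $g$ counting the admissible $e\in\Ext^1_\A(N,X)$ producing cone $M$, which after the octahedral bookkeeping becomes $g_{L X}^{M}$ up to automorphism normalization.

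The main obstacle, and where I would spend the most care, is the octahedral/cone analysis: showing precisely that the cone of $(e,g)\colon N[-1]\to X\oplus Y[-1]$ is an object of $\A$ exactly when $g$ is monic, and that in that case the cone is determined by a short exact sequence $0\to L\to M\to X\to 0$ with $0\to \ker(g)\to N\to L\to 0$ and $L\cong\operatorname{im}(g)$, so that $0\to L\to Y\to\operatorname{coker}(g)\to 0$. Managing the automorphism group factors $a_M, a_N, a_X, a_Y, a_L$ through these identifications — and checking that the $\{\cdot,\cdot\}$-quotient collapses to the single power $q^{-\lr{Y,X}}$ using $\lr{Y,X}=\dim_k\Hom_\A(Y,X)-\dim_k\Ext^1_\A(Y,X)$ and the heredity of $\A$ — is routine but error-prone, so I would set up the bijections explicitly rather than by dimension count alone. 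Once the numerator is rewritten as $\sum_{[L]} g_{LX}^M g_{YL}^N \cdot(\text{automorphism factors})$ and the Euler-form power is extracted, the claimed identity
$$F_{MN[-1]}^{X\oplus Y[-1]}=q^{-\lr{Y,X}}\frac{a_Xa_Y}{a_Ma_N}\sum\limits_{[L]}g_{LX}^Mg_{YL}^Na_L$$
follows by matching factors, and the equality with $F_{M[1]N}^{X[1]\oplus Y}$ is the shift symmetry noted above.
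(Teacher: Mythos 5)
The paper does not give a self-contained proof of this lemma — it cites the proof of To\"en's Proposition 7.1 and Sevenhant--Van den Bergh's (8.8) — so your plan of reconstructing the computation directly from To\"en's formula is a legitimate and more explicit route. However, the central cone analysis in your sketch is wrong in a way that would prevent the asserted identity from coming out.

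You take $(e,g)\colon N[-1]\to X\oplus Y[-1]$ with $e\in\Ext_{\A}^1(N,X)$ and $g\in\Hom_{\A}(N,Y)$, and assert that the cone lies in $\A$ exactly when $g$ is monic. In fact the opposite holds. The long exact cohomology sequence of the triangle $N[-1]\to X\oplus Y[-1]\to C\to N$ reads
\begin{equation*}
0\longrightarrow X\longrightarrow H^0(C)\longrightarrow N\xrightarrow{\ g\ } Y\longrightarrow H^1(C)\longrightarrow 0,
\end{equation*}
with $H^i(C)=0$ otherwise; since $\A$ is hereditary, $C\cong H^0(C)\oplus H^1(C)[-1]$, so $C\cong M\in\A$ if and only if $H^1(C)=\Coker(g)=0$, i.e.\ $g$ is \emph{epic}. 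Writing $L=\Ker(g)$, the sequence above splits into $0\to X\to M\to L\to 0$ and $0\to L\to N\to Y\to 0$, which are precisely the patterns $g_{LX}^M$ and $g_{YL}^N$ appearing in the statement. Your proposed picture — $g$ monic, $L\cong\im g$, $0\to L\to M\to X\to 0$, $0\to\Ker g\to N\to L\to 0$ — is reversed throughout and produces the wrong Hall numbers; moreover if $g$ were monic with $L\cong\im g$, then $L\cong N$ and the summation variable $[L]$ disappears. (You may be conflating this with the other form of To\"en's formula, $F_{MN[-1]}^{X\oplus Y[-1]}=\frac{|\Hom_{D^b(\A)}(X\oplus Y[-1],M)_{N}|}{a_M}\cdot\frac{\{X\oplus Y[-1],M\}}{\{M,M\}}$, in which it is the degree-zero component $h\colon X\to M$ that is forced to be monic.) A further minor slip: with the form of To\"en's formula you chose, the denominator is $a_{N[-1]}=a_N$, not $a_M$. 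With the cone analysis corrected, the remaining steps you outline — stratifying by $[L]$, matching automorphism orders, and extracting $q^{-\lr{Y,X}}$ from the $\{\cdot,\cdot\}$-quotient — should indeed go through.
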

\begin{proof}
This can be proved by the proof of \cite[Proposition 7.1]{Toen} and \cite[(8.8)]{Van}.
\end{proof}

\textbf{\emph{Proof of Theorem \ref{main}:}}
$(1)$~By abuse of notation, in what follows, for each $X\in D^b(\A)$, we write $u_X$ for $u_{[X]}$. On the one hand,

\begin{equation}u_{N'[1]}(u_Mu_N)=\sum\limits_{[L]}g_{MN}^Lu_{N'[1]}u_L=\sum\limits_{[L],[Y],[M']}g_{MN}^LF_{N'[1]L}^{Y[1]\oplus M'}u_{Y[1]\oplus M'},\end{equation}where we have used Lemma \ref{first} for the form of the middle term of the extension of $L$ by $N'[1]$. Since $\A$ is hereditary, we obtain that \begin{equation}u_{M'}u_{Y[1]}=F_{M'Y[1]}^{Y[1]\oplus M'}u_{Y[1]\oplus M'}.\end{equation} By \cite[Lemma 2.2]{RZ}, we have that $F_{M'Y[1]}^{Y[1]\oplus M'}=\{Y[1],M'\}\cdot|\Hom_{D^b(\A)}(Y[1],M')|=1$. So $u_{M'}u_{Y[1]}=u_{Y[1]\oplus M'}$, and thus
\begin{equation}u_{N'[1]}(u_Mu_N)=\sum\limits_{[L],[Y],[M']}g_{MN}^LF_{N'[1]L}^{Y[1]\oplus M'}u_{M'}u_{Y[1]}.\end{equation}

On the other hand,

\begin{equation}\begin{split}(u_{N'[1]}u_M)u_N&=\sum\limits_{[B'],[A]}F_{N'[1]M}^{B'[1]\oplus A}u_{A}u_{B'[1]}u_N\\&=\sum\limits_{[B'],[A],[Y],[B]}F_{N'[1]M}^{B'[1]\oplus A}F_{B'[1]N}^{Y[1]\oplus B}u_Au_Bu_{Y[1]}\\
&=\sum\limits_{[B'],[A],[Y],[B],[M']}F_{N'[1]M}^{B'[1]\oplus A}F_{B'[1]N}^{Y[1]\oplus B}g_{AB}^{M'}u_{M'}u_{Y[1].}\end{split}\end{equation}

Since $u_{N'[1]}(u_Mu_N)=(u_{N'[1]}u_M)u_N$, for each fixed $M',Y$, we have that
\begin{equation}\sum\limits_{[L]}g_{MN}^LF_{N'[1]L}^{Y[1]\oplus M'}=\sum\limits_{[B'],[A],[B]}F_{N'[1]M}^{B'[1]\oplus A}F_{B'[1]N}^{Y[1]\oplus B}g_{AB}^{M'}.\end{equation}

By Lemma \ref{second}, we obtain that
\begin{equation}\begin{split}
&\sum\limits_{[L],[P]}q^{-\lr{M',Y}}g_{MN}^L\frac{a_Ya_{M'}}{a_{N'}a_L}g_{PY}^{N'}g_{M'P}^La_P=\\&
\sum\limits_{[B'],[A],[B],[A'],[P']}q^{-\lr{A,B'}-\lr{B,Y}}\frac{a_{B'}a_{A}}{a_{N'}a_M}g_{A'B'}^{N'}g_{AA'}^Ma_{A'}
\frac{a_{Y}a_{B}}{a_{B'}a_N}g_{P'Y}^{B'}g_{BP'}^Na_{P'}g_{AB}^{M'}.
\end{split}
\end{equation}

Noting that \begin{equation*}\begin{split}&\lr{M',Y}-\lr{A,B'}-\lr{B,Y}=\lr{\hat{M}'-\hat{B},\hat{Y}}-\lr{A,B'}=\lr{\hat{A},\hat{Y}-\hat{B}'}\\&=
\frac{|\Hom_{\A}(A,Y)|}{|\Hom_{\A}(A,B')|}\frac{|\Ext_{\A}^1(A,B')|}{|\Ext_{\A}^1(A,Y)|},
\end{split}\end{equation*}we complete the proof.

$(2)$~Using $(u_{M}u_{N})u_{M'[-1]}=u_{M}(u_{N}u_{M'[-1]})$, we can prove it in a similar way as $(1)$. \fin
\section*{Acknowledgments}
This work is inspired by Ji Lin and Liangang Peng's work in \cite{LinP}.
The author is pleasantly informed that Jie Xiao and Fan Xu have also known this idea after the paper of Lin and Peng, but they do not write out.


\begin{thebibliography}{99}

\bibitem{Br} T. Bridgeland, { Quantum groups via Hall algebras of complexes},
{Ann. Math.} {\bf 177} (2013), 1--21.
\bibitem{ChenD} Q. Chen and B. Deng, {Cyclic complexes, Hall polynomials and simple Lie algebras}, J. Algebra {\bf 440} (2015), 1--32.

%
%
%
%
\bibitem{Gr95} J. A. Green, {Hall algebras, hereditary algebras and quantum groups}, Invent.
Math. {\bf 120} (1995), 361--377.
%
%
%
%
%
%
%
%
%
%
%
%
\bibitem{LinP} J. Lin and L. Peng, {Modified Ringel--Hall algebras, Green's formula and derived Hall algebras}, arXiv: 1707.08292v2.
\bibitem{LP} M. Lu and L. Peng, {Modified Ringel--Hall algebras and Drinfeld double}, arXiv: 1608.0310v1.
\bibitem{R90} C.~M. Ringel, {Hall algebras}, In: S. Balcerzyk, et al. (Eds.), Topics in Algebra, Part 1, in: Banach Center Publ. {\bf 26} (1990), 433--447.
%
%
\bibitem{R90a} C.~M. Ringel, {Hall algebras and quantum groups},
Invent. Math. {\bf 101} (1990), 583--592.
%
%
%
%
%
%
\bibitem{RW} S. Ruan and X. Wang, T-stabilities for a weighted projective line, arXiv: 1710.00986v2.
\bibitem{RZ} S. Ruan and H. Zhang, On derived Hall numbers for tame quivers, J. Algebra {\bf507} (2018), 1--18.
\bibitem{Van} B. Sevenhant and M. Ven den Bergh, On the double of the Hall algebra of a
quiver, J. Algebra {\bf221} (1999), 135--160.
\bibitem{Toen} B. To\"en, Derived Hall algebras, Duke Math.
J. {\bf 135} (2006), 587--615.
\bibitem{Xiao} J. Xiao, {Drinfeld double and Ringel--Green theory of Hall
algebras}, J. Algebra {\bf190} (1997), 100--144.

\bibitem{XiaoXu} J. Xiao and F. Xu, {Hall algebras associated to triangulated categories}, Duke Math. J. {\bf 143}(2) (2008), 357--373.

\bibitem{Yan} S. Yanagida, {A note on Bridgeland's Hall algebra of two-periodic complexes}, Math. Z. {\bf 282}(3) (2016), 973--991.
\bibitem{ZHC} H. Zhang, {A note on Bridgeland Hall algebras}, {Comm. Algebra} {\bf 46}(6) (2018), 2551--2560.
\bibitem{ZHC2} H. Zhang, {Bridgeland's Hall algebras and Heisenberg doubles}, {J. Algebra Appl.} {\bf 17}(6) (2018).
\end{thebibliography}
\end{document}